\numberwithin{equation}{section} \setlength{\textwidth} {15cm}
\def\beq{\begin{equation}}
\def\eeq{\end{equation}}
\def\bR{ {{\mathbb{R}}}}
\newcommand{\pd}{{\rm DP}}
\newcommand{\sgn}{{\rm sgn}}
\newcommand{\pk}[1]{p_{\kappa}}
\newcommand{\ol}{\overline{l}}
\newcommand{\ul}{\underline{l}}
\newcommand{\oL}{\overline{L}}
\newcommand{\uL}{\underline{L}}
\newcommand{\on}{\overline{n}}
\newcommand{\un}{\underline{n}}
\newtheorem{defn}{{\bf Definition}}[section]
\newtheorem{thm}[defn]{{\bf Theorem}}
\newtheorem{cor}[defn]{{\bf Corollary}}
\newtheorem{lem}[defn]{{\bf Lemma}}
\newtheorem{prop}[defn]{{\bf Proposition}}
\newtheorem{rem}[defn]{{\bf Remark}}
\newtheorem{notation}[defn]{Notation}
\newenvironment{proof}[1][Proof]{\textbf{#1.} }{\hfill \rule{0.5em}{0.5em}}
\begin{document}

\title{Invariants in Quantum Geometry}
\author{Adrian P. C. Lim \\
Email: ppcube@gmail.com
}

\date{}

\maketitle

\begin{abstract}
In quantum geometry, we consider a set of loops, a compact orientable surface and a solid compact spatial region, all inside $\mathbb{R} \times \mathbb{R}^3 \equiv \mathbb{R}^4$, which forms a triple. We want to define an ambient isotopic equivalence relation on such triples, so that we can obtain equivalence invariants. These invariants describe how these submanifolds are causally related to or `linked' with each other, and they are closely associated with the linking number between links in $\mathbb{R}^3$. Because we distinguish the time-axis from spatial subspace in $\mathbb{R}^4$, we see that these equivalence relations, will also imply causality.
\end{abstract}

\hspace{.35cm}{\small {\bf MSC} 2010: } 51H20, 57Q45  \\
\indent \hspace{.35cm}{\small {\bf Keywords}: Quantum geometry, Causality, Linking number, Loops, Surface}




Quantum geometry describes how quantum matter interacts with geometry in \cite{Ashtekar:2000eq}. One of the key players in this theory is loops, which will yield topological invariants of loops, as described in \cite{PhysRevLett.61.1155}. But unlike a topological theory, Smolin in \cite{Smolin:2005mq} remarked that one should not focus on the topology on the ambient space, but rather how the events are causally linked.

We consider our 4-manifold to be $\bR^4 \equiv \mathbb{R} \times \mathbb{R}^3 $, which will be our ambient space. The time-axis is distinguished from spatial subspace. A loop is a continuous simple closed curve in $\bR \times \bR^3$, considered as a 1-dimensional manifold. We also consider an orientable compact surface, with or without boundary, and a compact solid region, viewed as a 3-dimensional submanifold, all inside our ambient space.

We want to define invariants that describe how these submanifolds are `linked' together in the ambient space. Linking is a topological concept; two loops are linked together if it is impossible to translate one loop by an arbitrary distance from the other loop without the two objects actually crossing one another. But in $\bR^4$, one can topologically deform and `unlink' the loops, without crossing. As such, if we use ambient isotopy as an equivalence relation, then an equivalence class of a set of loops in $\bR^4$ will be a set of `unlinked' simple closed curves.

\section{Hyperlinks in $\bR^4$}

Consider our ambient space $\bR^4 \equiv \bR \times \bR^3$, whereby $\bR$ will be referred to as the time-axis and $\bR^3$ is the spatial 3-dimensional Euclidean space. Fix the standard coordinates on $\bR^4\equiv \bR \times \bR^3 $, with time coordinate $x_0$ and spatial coordinates $(x_1, x_2, x_3)$. Let $\pi_0: \bR^4 \rightarrow \bR^3$ denote this projection.

Let $\{e_i\}_{i=1}^3$ be the standard basis in $\bR^3$. And $\Sigma_i$ is the plane in $\bR^3$, containing the origin, whose normal is given by $e_i$. So, $\Sigma_1$ is the $x_2-x_3$ plane, $\Sigma_2$ is the $x_3-x_1$ plane and finally $\Sigma_3$ is the $x_1-x_2$ plane.

Note that $\bR \times \Sigma_i \cong \bR^3$ is a 3-dimensional subspace in $\bR^4$. Here, we replace one of the axis in the spatial 3-dimensional Euclidean space with the time-axis. Let $\pi_i: \bR^4 \rightarrow \bR \times \Sigma_i$ denote this projection.

For a finite set of non-intersecting simple closed curves in $\bR^3$ or in $\bR \times \Sigma_i$, we will refer to it as a link. If it has only one component, then this link will be referred to as a knot. A simple closed curve in $\bR^4$ will be referred to as a loop. A finite set of non-intersecting loops in $\bR^4$ will be referred to as a hyperlink in this article. We say a link or hyperlink is oriented if we assign an orientation to its components.

We need to consider the space of hyperlinks in $\bR \times\bR^3$, which is too big for our consideration. Given any hyperlink in $\bR \times\bR^3$, it is ambient isotopic to the trivial hyperlink, a union of `unlinked' loops. So the equivalence class of ambient isotopic hyperlinks will give us only the `unlinked' hyperlink, which is too trivial. Hence we will instead consider a special equivalence class of hyperlinks.

\begin{defn}(Time-like separation)\\
Let $\vec{x}, \vec{y}$ be 2 points in $\bR \times \bR^3$, with coordinates $(x_0, x_1, x_2, x_3)$ and $(y_0, y_1, y_2, y_3)$ respectively. We say $\vec{x}$ and $\vec{y}$ are time-like separated if the Minkowski distance between them is \beq \sum_{i=1}^3 (x_i - y_i)^2 - (x_0 - y_0)^2 < 0. \nonumber \eeq We also say $\vec{x}$ and $\vec{y}$ are space-like separated if the Minkowski distance between them is \beq \sum_{i=1}^3 (x_i - y_i)^2 - (x_0 - y_0)^2 > 0. \nonumber \eeq
\end{defn}

When 2 points are time-like separated, we see that their time components must be different. Notice that we use the Minkowski metric to define the time-like separation. In Quantum Field Theory, one uses the Minkowski metric. But in General Relativity, this is no longer correct as the metric is actually a variable and the stress-energy tensor would determine the correct metric from solving the Einstein's equations.

A quantized theory of gravity should be independent of any background metric. Therefore, one should not use Minkowski metric and since there is no notion of a preferred metric, there is no such thing as time-like separation. See \cite{Thiemann:2002nj} and \cite{Mercuri:2010xz}. Nevertheless, we will still borrow the term `time-like' and define the following special class of hyperlinks we would like to consider.

\begin{defn}(Time-like hyperlink)\label{d.tl.1}\\
Let $L$ be a hyperlink. We say it is a time-like hyperlink if given any 2 distinct points $\vec{x}\equiv (x_0, x_1, x_2, x_3), \vec{y}\equiv (y_0, y_1, y_2, y_3) \in L$, $\vec{x} \neq \vec{y}$,
\begin{enumerate}
  \item (T1) $\sum_{i=1}^3(x_i - y_i)^2 > 0$;
  \item (T2) if there exists $i, j$, $i \neq j$ such that $x_i = y_i$ and $x_j = y_j$, then $x_0 - y_0 \neq 0$.
\end{enumerate}
\end{defn}

We make the following remarks, which is immediate from the definition.

\begin{rem}\label{r.p.1}
\begin{enumerate}
  \item In Condition T1, we insist that any 2 distinct points in a hyperlink are also separated, when projected using $\pi_0$, in 3-dimensional spatial space $\bR^3$. This is to ensure that when we project the hyperlink in $\bR^3$, we obtain a link.
  \item Conditions T1 and T2 imply that given a hyperlink $L$, for each $i=1,2,3$, $\pi_i(L) \in \bR \times \Sigma_i$ is a link. Furthermore, they guarantee that for each crossing as defined in Subsection \ref{ss.ld}, its algebraic crossing number and its time-lag are well-defined.
\end{enumerate}
\end{rem}

\begin{defn}\label{d.ts.1}
Two oriented time-like hyperlinks $L$ and $L'$ in $\bR \times \bR^3$ are time-like isotopic to each other if there is an orientation preserving continuous map $F: \bR \times \bR^3 \times [0,1] \rightarrow \bR \times \bR^3$, such that
\begin{enumerate}
  \item $F_0$ is the identity map;
  \item $F_t$ is a homeomorphism from $\bR \times \bR^3$ to $\bR \times \bR^3$;
  \item $F_1(L) = L'$;
  \item each $F_t(L)$ is a time-like hyperlink.
\end{enumerate}
In other words, two time-like hyperlinks $L_1$ and $L_2$ in $\bR \times \bR^3$ are time-like isotopic if $L_1$ can be continuously deformed to $L_2$ while remaining time-like.
 \end{defn}

\begin{rem}
By definition, $L$ and $L'$ in $\bR \times \bR^3$ are time-like isotopic to each other implies that
\begin{enumerate}
  \item $\pi_0(L)$ and $\pi_0(L')$ are ambient isotopic to each other in $\bR^3$;
  \item $\pi_i(L)$ and $\pi_i(L')$ are ambient isotopic to each other in $\bR \times \Sigma_i$, $i=1,2,3$.
\end{enumerate}
\end{rem}

Throughout this article, all our hyperlinks will be time-like and we consider equivalence classes of such hyperlinks using Definition \ref{d.ts.1}.

\begin{defn}\label{d.tau.1}
For any $\vec{p} = (p_0, p_1, p_2, p_3) \in \bR \times \bR^3$, we define $\tau(\vec{p}) = p_0$.
Given two loops $\ol$ and $\ul$, we say that $\ol < \ul$ if for any $\vec{p} \in \ol$, any $\vec{q} \in \ul$, we have that $\tau(\vec{p}) < \tau(\vec{q})$. When $\ol < \ul$, we say that the loop $\ol$ occurs before the loop $\ul$. If $\ol > \ul$, we say that the loop $\ol$ occurs after the loop $\ul$.

\end{defn}

\subsection{Link Diagrams}\label{ss.ld}

The next thing that we want to define is the hyperlinking number of a hyperlink. This should be thought of as a generalization of the linking number of a link, as described in Definition 4.3 in \cite{CS-Lim01}. Indeed, one can calculate the hyperlinking number from a link diagram.

Any link in $\mathbb{R}^3$ can be represented by a link diagram in $\bR^2$, up to isotopy. This allows us to study links using link diagrams.
Two link diagrams $D$ and $D'$ are (planar) isotopic if there exists an isotopy $h$ of $\mathbb{R}^2$ such that $h(1,D) = D'$. To check if $D$ and $D'$ are isotopic, it suffices to show that $D$ can be obtained from $D'$ by a sequence of Reidemeister Moves.

A crossing $p$ on an oriented link diagram is represented (up to isotopy) either by
\beq
\xygraph{
     !{0;/r1.0pc/:}
     [u(0.5)]
     !{\xoverv=<}
   }
\ \ {\rm or}\ \
\xygraph{
     !{0;/r1.0pc/:}
     [u(0.5)]
     !{\xunderv=<}
   }
. \nonumber \eeq We assign the value $\varepsilon(p):= +1$ for the diagram on the left; $\varepsilon(p):= -1$ for the diagram on the right. Note that $\varepsilon(p)$ is known as the algebraic crossing number of the crossing $p$.

Given 2 oriented simple closed curves $\ol$ and $\ul$ which are non-intersecting in $\bR^3$, project it on $\Sigma_i$ to form a link diagram. Write $\pd(\Sigma_i ;\ \ol, \ul)$ to denote the set of all crossings in a link diagram of curves $\ol$ and $\ul$. Define the linking number between $\ol$ and $\ul$, \beq {\rm lk}(\ol, \ul) := \sum_{p \in \pd(\Sigma_i ;\ \ol, \ul)}\varepsilon(p). \nonumber \eeq The linking number between 2 oriented knots in $\bR^3$ is an invariant up to ambient isotopy, so it does not matter which plane we project it onto.

Recall a hyperlink is a finite set of non-interesting simple closed curves in $\bR \times \bR^3$ and considered as time-like, as defined in Definition \ref{d.tl.1} and oriented. We can project an oriented hyperlink on $\Sigma_i$ to form a link diagram as before. Suppose each crossing $p$ on a link diagram is formed from projecting 2 arcs $\overline{C}$ and $\underline{C}$ in loops $\ol$ and $\ul$ respectively.

Let $\vec{x}=(x_0, x) \in \overline{C}$ and $\vec{y}=(y_0, y) \in \underline{C}$ respectively, with $x, y \in \bR^3$, such that $p$ is the projection of $x$ and $y$ onto the plane $\Sigma_i$. Note that $x_0$ and $y_0$ are the time components of $\vec{x}$ and $\vec{y}$ respectively.

Define the time-lag of $p$ by
\beq \sgn(p;\ol_0 : \ul_0) = \left\{
                                  \begin{array}{ll}
                                    1, & \hbox{$x_0 < y_0$;} \\
                                    -1, & \hbox{$x_0 > y_0$,}
                                  \end{array}
                                \right. \nonumber \eeq
and the hyperlinking number between $\ol$ and $\ul$ as \beq
{\rm sk}(\ol, \ul) := \sum_{k=1}^3\sum_{p \in \pd(\Sigma_k; \ol, \ul)}\varepsilon(p)\cdot \sgn(p;\ol_0 : \ul_0). \nonumber \eeq

\begin{rem}\label{r.r.1}
Let $L$ be an oriented time-like hyperlink. Note that the crossing number $\varepsilon(p)$ for a crossing $p$ in a link diagram in $\Sigma_k$ depends on projecting $\pi_0(L)$ onto $\Sigma_k$, $k=1,2,3$. The time-lag of the same crossing $p$ depends on projecting $\pi_k(L) \in \bR \times \Sigma_k$ onto $\Sigma_k$.
\end{rem}

Unlike the linking number, the hyperlinking number is not an invariant under time-like isotopy. To make it an invariant, we need to impose an extra condition on a hyperlink.

\begin{defn}(Time-ordered pair of hyperlinks)\label{d.tl.2}\\
Suppose we have an oriented time-like hyperlink, denoted as $\chi(\oL, \uL)$, consisting of 2 non-empty sets of oriented time-like hyperlink, denoted as $\oL= \{\ol^1, \cdots, \ol^{\overline{n}}\}$ and $\uL= \{\ul^1, \cdots, \ul^{\underline{n}}\}$ respectively.

Pick a component loop $\ol^u \in \oL$ and another component loop $\ul^v \in \uL$. Refer to Definition \ref{d.tau.1}. We require that either
$\ol^u < \ul^v$, or $\ol^u > \ul^v$. In this case, we say that the oriented time-like hyperlink $\chi(\oL, \uL)$ consists of an oriented time-ordered pair of time-like hyperlinks.
\end{defn}

If we can order $\ol^u$ and $\ul^v$, then the hyperlinking number between $\ol^u$ and $\ul^v$ will be
\beq {\rm sk}(\ol^u, \ul^v) =
\left\{
  \begin{array}{ll}
    3 \times {\rm lk}(\pi_0(\ol^u), \pi_0(\ul^v)), & \hbox{$\ol^u < \ul^v$;} \\
    -3\times {\rm lk}(\pi_0(\ol^u), \pi_0(\ul^v)), & \hbox{$\ol^u > \ul^v$.}
  \end{array}
\right. \nonumber \eeq Note that ${\rm sk}(\ol^u, \ul^v) = -{\rm sk}(\ul^v, \ol^u)$.

\begin{defn}\label{d.tl.3}
Let $\chi(\oL, \uL)$ be an oriented time-like hyperlink which consists of a pair of oriented time-ordered hyperlinks $\oL= \{\ol^1, \cdots, \ol^{\overline{n}}\}$ and $\uL= \{\ul^1, \cdots, \ul^{\underline{n}}\}$ as defined in Definition \ref{d.tl.2}. We say that $\chi(\oL, \uL)$ is time-like isotopic to an oriented hyperlink $\chi(\oL', \uL')$, preserving the time-ordering, if there exists a continuous map $F$ as defined in Definition \ref{d.ts.1}, such that we have $F_t(\ol^u) < F_t(\ul^v)$ or $F_t(\ol^u) > F_t(\ul^v)$ for all $t$.
\end{defn}

\begin{rem}\label{r.t.1}
Suppose $\ol^u < \ul^v$. Then we must have $F_t(\ol^u) < F_t(\ul^v)$ for all $t$.
\end{rem}

We will now consider equivalence classes of time-like hyperlinks $\chi(\oL, \uL)$, which consists of a time-ordered pair. First, note that this will not include all time-like hyperlinks, as the component loops may not be time-ordered. Those pair of hyperlinks which cannot be ordered as described in Definition \ref{d.tl.2} will not be in the equivalence class.

Second, suppose we have a pair of loops, $(\ol, \ul)$, such that under time-like isotopy, we can change the ordering from $\ol < \ul$ to $\ol > \ul$. Consider $L = (\ol, \ul)$ is a time-like hyperlink, with $\ol < \ul$ and $L'$ is time-like isotopic to $L$, but with $\ol > \ul$. Then, we will treat $L$ and $L'$ to be inequivalent under the above relation.

\begin{rem}
This time-ordering between loops will imply causality. The equivalence relation says that we are not allowed to consider homeomorphisms in space-time that violates causality.
\end{rem}

Under time-like isotopy that preserves the time-ordering, the hyperlinking number between equivalent classes of $\ol^u$ and $\ul^v$ will now be an invariant.

\begin{thm}\label{t.w.1}
Consider two oriented time-like hyperlinks, $\oL = \{\ol^u \}_{u=1}^{\on}$, $\uL = \{\ul^v \}_{v=1}^{\un}$ in $\bR \times \bR^3$ with non-intersecting (closed) loops, which together form a new oriented time-ordered pair of time-like hyperlinks, denoted by $\chi(\oL, \uL)$.

Define the hyperlinking number between $\ol^u$ and $\uL$ as \beq {\rm sk}(\ol^u, \uL) := \sum_{v=1}^{\un}{\rm sk}(\ol^u, \ul^v), \nonumber \eeq
and the hyperlinking number between
$\oL$ and $\uL$, \beq {\rm sk}(\oL, \uL) := \sum_{u=1}^{\on}{\rm sk}(\ol^u, \uL), \nonumber\eeq both calculated from $\chi(\oL, \uL)$.

Consider now the equivalence class of $\chi(\oL, \uL)$, under time-like isotopy which preserves the time-ordering as given in Definition \ref{d.tl.3}. Then the hyperlinking number between $\oL$ and $\uL$  is invariant under this equivalence relation.
\end{thm}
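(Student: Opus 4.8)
The plan is to exploit the reduction formula stated just before Definition~\ref{d.tl.3}, which expresses the hyperlinking number of a time-ordered pair of component loops entirely in terms of the classical linking number of their spatial projections. Since ${\rm sk}(\oL, \uL)$ is by definition the double sum $\sum_{u}\sum_{v}{\rm sk}(\ol^u, \ul^v)$, it suffices to show that each summand ${\rm sk}(\ol^u, \ul^v)$ is individually invariant under time-like isotopy preserving the time-ordering; the invariance of the full sum then follows immediately by additivity.

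First I would fix a pair $(\ol^u, \ul^v)$ and recall that, because $\chi(\oL, \uL)$ is time-ordered, exactly one of $\ol^u < \ul^v$ or $\ol^u > \ul^v$ holds. In the first case every point of $\ol^u$ has strictly smaller time coordinate than every point of $\ul^v$, so at every crossing in every projection plane $\Sigma_k$ the time-lag $\sgn(p)$ equals $+1$; consequently the weighted sum over crossings collapses to $\sum_{k=1}^3 {\rm lk}(\pi_0(\ol^u), \pi_0(\ul^v)) = 3\,{\rm lk}(\pi_0(\ol^u), \pi_0(\ul^v))$, using that the ordinary linking number does not depend on the choice of projection plane. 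The case $\ol^u > \ul^v$ is identical except that every time-lag is $-1$, giving the factor $-3$. This is precisely the reduction formula, and it isolates the two ingredients that must be shown to be invariant: the sign $\pm 1$ and the integer ${\rm lk}(\pi_0(\ol^u), \pi_0(\ul^v))$.

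For the sign, I would invoke Definition~\ref{d.tl.3} together with Remark~\ref{r.t.1}: an admissible isotopy $F_t$ keeps $F_t(\ol^u) < F_t(\ul^v)$ (respectively $>$) for all $t \in [0,1]$, so the strict inequality determining the sign cannot flip during the deformation. For the linking number, I would use that a time-like isotopy induces, via $\pi_0 \circ F_t$, an ambient isotopy of the spatial projections in $\bR^3$ (the remark following Definition~\ref{d.ts.1}); condition (T1) guarantees that $\pi_0(F_t(L))$ remains a genuine link throughout, so $\pi_0(\ol^u)$ and $\pi_0(\ul^v)$ are carried to $\pi_0(F_1(\ol^u))$ and $\pi_0(F_1(\ul^v))$ simultaneously by an ambient isotopy of $\bR^3$. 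Since the classical linking number is an ambient isotopy invariant, ${\rm lk}(\pi_0(\ol^u), \pi_0(\ul^v))$ is unchanged.

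Combining the two ingredients, each ${\rm sk}(\ol^u, \ul^v)$ is preserved, and summing over $u$ and $v$ yields the invariance of ${\rm sk}(\oL, \uL)$. I expect the only genuinely delicate point to be the second ingredient: one must check carefully that the projected family $\pi_0 \circ F_t$ is a bona fide ambient isotopy of the pair of spatial curves---in particular that the time-like conditions (T1)--(T2) prevent the projections from acquiring self-intersections or degeneracies at any intermediate time $t$---so that the classical invariance theorem for ${\rm lk}$ may legitimately be applied. It is exactly here that the restriction to time-like, time-ordered hyperlinks does the essential work, since it is what forces the time-lag to be constant and thereby converts the a priori non-invariant quantity ${\rm sk}$ into a topological invariant.
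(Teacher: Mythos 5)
Your proof is correct and follows essentially the same route as the paper's: reduce to each pair $(\ol^u, \ul^v)$, apply the reduction formula ${\rm sk}(\ol^u, \ul^v) = \pm 3\,{\rm lk}(\pi_0(\ol^u), \pi_0(\ul^v))$, and observe that preservation of the time-ordering (Remark \ref{r.t.1}) fixes the sign of the time-lag while ambient-isotopy invariance of the classical linking number fixes the integer factor. The only difference is that you articulate more explicitly why the projected family $\pi_0 \circ F_t$ constitutes a genuine ambient isotopy of the spatial links, a point the paper's proof asserts without elaboration.
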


\begin{proof}
It suffices to show that for each $u = 1, \cdots, \on$ and each $v = 1, \cdots, \un$, ${\rm sk}(\ol^u, \ul^v)$ is invariant under the equivalence relation. Suppose $\ol^u < \ul^v$. As discussed earlier, ${\rm sk}(\ol^u, \ul^v) = 3 \times {\rm lk}(\pi_0(\ol^u), \pi_0(\ul^v))$.

Now, ${\rm lk}(\pi_0(\ol^u), \pi_0(\ul^v))$ is invariant under any time-like isotopy of the hyperlink containing $\ol^u$ and $\ul^v$. By Remark \ref{r.t.1}, the time-lag for all the crossings in a link diagram projected on $\Sigma_i$, $i=1,2,3$, for any continuous deformation of $\pi_0(\ol^u)$ and $\pi_0(\ul^v)$ will be the same, and it will never change sign. Hence the hyperlinking number ${\rm sk}(\ol^u, \ul^v)$ remains invariant under time-like isotopy, as long as the time-ordering of the continuous deformation of $\ol^u$ and $\ul^v$ is also preserved throughout the time-like isotopy.

\end{proof}

\section{Surfaces}

Choose an orientable, compact smooth surface $S \subset \bR^4$, with or without boundary. When $S$ has no boundary, we will henceforth call it closed. Pick a time-like loop $l \subset \bR^4$, assumed to be $C^1$, and disjoint from $S$. From page 43 in \cite{greensite2011introduction}, it is possible to define a linking number between a closed surface $S$ and a loop.

We want to define a linking number by projecting $S$ and $l$ inside $\bR^3$ using $\pi_0$. However, when we project $S$ inside $\bR^3$, $\pi_0(S)$ may not be a surface. Since we want to consider ambient isotopic equivalence classes of surface, and for the equivalent class of surface which we are interested in the last section of this article, we shall require that the surface in consideration $S \subset \bR^4$ satisfies the condition that $\pi_0: S \rightarrow \pi_0(S)$ is injective. Furthermore, we assume that $\pi_0(l)$ intersects $\pi_0(S)$ at finitely many points and we do not allow $\pi_0(l)$ to be tangent to $\pi_0(S)$ at any such intersection points.

\begin{rem}
The condition that $\pi_0: S \rightarrow \pi_0(S)$ is injective is to ensure that we project $S$ `nicely' inside $\bR^3$, without any two different patches on the surface $S$ `collapsing' onto the same patch on $\pi_0(S)$.
\end{rem}

Let $\pd(\pi_0; l, S)$ denote the set of finitely many intersection points between $\pi_0(l)$ and $\pi_0(S)$ as described in the preceding paragraph, henceforth termed as piercings. Choose a non-zero normal $n_S$ on $\pi_0(S)$. Orientate $\pi_0(l)$ and let $\nu_l$ be the non-zero tangent vector along the oriented curve $\pi_0(l)$ in $\bR^3$. For a piercing $p$, define the orientation of $p$, $\sgn(p; l, S)$, which takes the value +1 if $n_S(p)\cdot \nu_l(p) > 0$; $-1$ if $n_S(p)\cdot \nu_l(p) < 0$. This choice of orientation is consistent with the volume form $dx_1 \wedge dx_2 \wedge dx_3$ on $\bR^3$.

For each such piercing $p$, let $(x_0, x) \in l$, $(y_0, y) \in S$ such that $p = \pi_0(x_0,x) = x$ and $p = \pi_0(y_0, y)$, so $p = x = y \in \bR^3$. Define the height of $p$,
\beq {\rm ht}(p; l, S) =
\left\{
  \begin{array}{ll}
    1, & \hbox{$x_0 < y_0$;} \\
    -1, & \hbox{$x_0 > y_0$.}
  \end{array}
\right.
 \nonumber \eeq

Define the algebraic piercing number of $p$ as \beq \varepsilon(p) :=  \sgn(p; l, S)\cdot {\rm ht}(p; l, S). \nonumber \eeq And we define the linking number between $l$ and $S$ as \beq {\rm lk}(l, S) := \sum_{p \in \pd(\pi_0; l,S)}\varepsilon(p). \nonumber \eeq

\begin{rem}
The linking number defined in \cite{EH-Lim03}, is negative of the definition given here.
\end{rem}

When $S$ is closed, we will show later that this linking number is an invariant under ambient isotopy of $l$ and $S$. See Theorem \ref{t.w.2}. When $S$ has a boundary, then it is no longer true that the linking number is an invariant under ambient isotopy.

When $S$ is closed and $\pi_i: S \rightarrow \pi_i(S)$ is injective, we can compute the linking number by projecting $l$ and $S$ inside $\bR \times \Sigma_i$, for $i=1,2,3$, using $\pi_i$. We will also assume that $\pi_i(l)$ intersect $\pi_i(S)$ at a finite number of intersections and we do not allow $\pi_i(l)$ to be tangent to $\pi_i(S)$ at any such intersection points, henceforth also known as piercings.

The height of any piercing $p \in \pi_i(S) \cap \pi_i(l)$ will now be defined using $x_i$ and $y_i$, positive when $x_i < y_i$; negative otherwise, for $(x_0,x_1, x_2, x_3) \in l$ and $(y_0, y_1, y_2, y_3) \in S$. The orientation will be defined using the orientation of the curve $\pi_i(l)$ and the non-zero normal on $\pi_i(S)$, which should be consistent with the orientations on $\bR \times \Sigma_1$, $\bR \times \Sigma_2$ and $\bR \times \Sigma_3$, determined by volume forms $dx_0 \wedge dx_3 \wedge dx_2$, $dx_0 \wedge dx_1 \wedge dx_3$ and $dx_0 \wedge dx_2 \wedge dx_1$ respectively. That is, if $\nu_l(p)$ and $n_S(p)$ are the tangent vector of $\pi_i(l)$ and normal to $\pi_i(S)$ respectively at $p = \pi_i(S) \cap \pi_i(l)$, then the orientation of $p$ is assigned +1 if $\nu_l(p) \cdot n_S(p) < 0$; -1 if $\nu_l(p) \cdot n_S(p) >0$.

The algebraic piercing number of $p$ will now be defined as the product of its height and its orientation. By summing up the algebraic numbers of all its piercings in $\pi_i(S)$, it will be equal to the linking number defined using $\pi_0$.

Let $c \geq 0$ and consider the plane $P_c\subset \bR^4$ defined by the linear equation $x_0 = c(1-x_1 - x_2 - x_3)$. The map $T_c: \bR^3 \rightarrow P_c \subset \bR^4$, given by \beq T_c: (x,y,z) \in \bR^3 \longmapsto \left(c(1-x-y-z), x,y,z  \right) \in P_c \label{e.tc.1} \eeq is a homeomorphism for $c \geq 0$. When $c = 0$, we view $T_c$ as an identity map on $\bR^3$, identified with $\{0\} \times \bR^3$.

In the event that $S \subset \{0\} \times \bR^3$, we will map $S$ inside $P_c$ using $T_c$, for $c>0$. Then $\pi_0: T_c(S) \rightarrow \bR^3$ and $\pi_i: T_c(S) \rightarrow \bR \times \Sigma_i$, $i=1,2,3$, are all injective. We will define the linking number between a time-like loop $l$ disjoint from $S \subset \bR^3$, as the limit of the linking number between $l$ and $T_c(S)$, as $c$ goes down to 0.

\subsection{Surfaces without boundary}

We consider $S$ is a closed surface in $\bR^4$ and $l$ is a time-like loop, disjoint from $S$. To show that the linking number is invariant under ambient isotopy of both $l$ and $S$, it suffices to consider $S \subset  \{0\} \times \bR^3$. Note that $\sup_{\vec{x} \in S}\tau(\vec{x}) = \inf_{\vec{x} \in S}\tau(\vec{x}) = 0$.

Recall we distinguished the time-axis in $\bR^4$ and we defined $\tau$ in Definition \ref{d.tau.1}. An arc will be referred to as an open, connected subset inside a curve throughout this article.

\begin{defn}(Interior and exterior)\\
Consider a connected, closed surface $S_0$ in $\bR^3$. Then $S_0$ will divide $\bR^3$ into 2 open connected sets, one is bounded and the other is unbounded. We will refer the bounded set as the interior of $S_0$; the latter as the exterior of $S_0$.

When $S_0 = \bigcup_{k=1}^n \tilde{S}_k \subset \bR^3$ is a disconnected closed surface, the interior of $S_0$ will be the union of the interior of all its connected components $\tilde{S}_k$.
\end{defn}

\begin{defn}(Movement $W$)\\
Suppose we have an arc $C_0$, and it lies in the interior of a connected closed surface $S_0$, except its two end points $p$ and $q$, which intersect $S_0$, all inside $\bR^3$. We can slowly pull the two ends of the arc, withdrawing it into the exterior of $S_0$, such that the arc continues to intersect $S_0$ at two intersection points. Continue this process until the arc is tangent to the surface, or just touches the surface at only one intersection point. If we further withdraw the arc, it will now lie totally in the exterior, disjoint from the surface $S_0$.

Suppose we have an arc $C$ and a connected closed surface $S$ in $\bR^4$. Further assume that $S = \pi_0(S)$. We will term a continuous deformation of $C$ as Movement $W$ between $C$ and $S$, if $C$ is ambient isotopic to $C'$ in $\bR^4$, with
\begin{itemize}
  \item during the ambient isotopy between $C$ and $C'$, the intermediate deformed arcs between $C$ and $C'$ remains disjoint with $S$;
  \item this continuous deformation of arcs between $C$ and $C'$ projects down into $\bR^3$ using $\pi_0$, and it gives us a continuous deformation between $\pi_0(C)$ and $\pi_0(C')$ as described in the preceding paragraph, with $\pi_0(C)$ and $\pi_0(C')$ lying in the interior and exterior of $\pi_0(S)$ respectively.
\end{itemize}
\end{defn}

\begin{rem}
\begin{enumerate}
  \item Note that when we apply Movement $W$, we are removing two piercings between $\pi_0(C)$ and $\pi_0(S)$.
  \item In the definition of Movement $W$, we consider $C$ to be inside the interior of $\pi_0(S)$. But we can also consider $C$ to be in the exterior of $\pi_0(S)$ and withdraw it to be in the interior of $\pi_0(S)$.
\end{enumerate}
\end{rem}

\begin{lem}\label{l.w.1}
Let $C$ be an arc inside a time-like loop $l$, i.e. $C$ is an open, connected curve in the loop. Suppose $S$ is a closed surface in $\{0\} \times \bR^3$ and $C$ intersects at two piercings in $S$. We can apply Movement $W$, to remove these piercings, if and only if we have either $\tau(\vec{x}) < 0$, or $\tau(\vec{x}) > 0$, for every $\vec{x} \in C$.
\end{lem}

\begin{proof}
Let $S' \subset S$ be homeomorphic to an open disc, and it contains two piercings. Without any loss of generality, suppose $\pi_0(C)$ lies in the interior, disjoint from $\pi_0(S)$ and for every $\vec{x} \in C$, we have $\tau(\vec{x}) > 0$. Thus, for any $\vec{x} \in C$, we have $\tau(\vec{x}) > \tau(\vec{y})=0$ for any $\vec{y} \in S'$.

The closed arc $\overline{\pi_0(C)}$ intersects the surface $\pi_0(S')$ at two intersection points, call them $p \in \pi_0(S')$ and $q \in \pi_0(S')$ respectively. Note that $p$ and $q$ correspond to points $\vec{p} = (p_0, p) \in C$ and $\vec{q} = (q_0,q)\in C$ respectively, which are the end points of $C$.

Because $\tau(\vec{p}), \tau(\vec{q}) > 0$, we can continuously deform $C \subset \bR\times \bR^3$ into $C' \subset \bR\times \bR^3$, always keeping the time component bigger than $\tau(\vec{y})$ for any $\vec{y} \in S'$ during the deformation process, and any intermediate deformed curve between $C$ and $C'$ remains disjoint from $S'$. Because any projected deformed curve only intersects $\pi_0(S') \equiv S' \subset S$, we see that the deformed curve must remain disjoint from $S$. This allows us to apply Movement $W$ and remove the piercings. The argument for the case when $\tau(\vec{x}) < 0$ is similar.

Now to prove the other direction. Suppose we can apply Movement $W$. During the Movement $W$ process, the arc $C$ will be continuously deformed to an arc $C'$ such that $\pi_0(C')$ is just tangent to the surface $\pi_0(S)$. Since during the process, we must have $C'$ is disjoint from $S$, we see that there must exist a neighborhood $S' \subset S$, homeomorphic to an open disc in $\bR^2$, such that either $\tau(\vec{x}) > \sup_{\vec{p} \in S'}\tau(\vec{p})$, or $\tau(\vec{x}) < \inf_{\vec{p} \in S'}\tau(\vec{p})$, for $\pi_0(\vec{x})$ being the intersection point when $\pi_0(C')$ just touches $\pi_0(S')$. Since Movement $W$ is a continuous deformation, hence there must exist a $C''$, ambient isotopic to $C'$, such that $\pi_0(C'')$ intersects $\pi_0(S')$ at two piercings and either $\tau(\vec{x}) < \inf_{\vec{p} \in S'}\tau(\vec{p})=0$, or $\tau(\vec{x}) > \sup_{\vec{p} \in S'}\tau(\vec{p})=0$, for every $\vec{x} \in C''$. This completes the proof.
\end{proof}

\begin{cor}\label{c.w.1}
Movement $W$ is not permissible for an arc $C$, if and only if the piercings $\pi_0(\vec{x}), \pi_0(\vec{y}) \in S$ correspond to points $\vec{x}, \vec{y} \in C$ respectively, with the property that $\tau(\vec{y}) >  0$ and $\tau(\vec{x}) <  0$.
\end{cor}

\begin{defn}
Suppose $\vec{p}, \vec{q}$ are points on the loop, which are projected down to $p$ and $q$ respectively, to form piercings on the closed surface $\pi_0(S)$. Let $C$ be an arc in the loop, whose boundary points are $\vec{p}$ to $\vec{q}$. We say that these two piercings are consecutive if the projected arc $\pi_0(C)$ joining the piercings $p = \pi_0(\vec{p})$ and $q = \pi_0(\vec{q})$, lies strictly inside the interior or the exterior of $\pi_0(S)$.
\end{defn}

When a time-like loop $l$ is disjoint from a closed surface $S$, we want to show that the linking number is invariant under ambient isotopy. It suffices to deform $l$, while keeping $S$ unchanged. It is easy to see that any orientation preserving ambient isotopy that shifts the position of piercings inside $\pi_0(S)$ will not change the orientation and height of each piercing in $\pi_0(S)$. So, to show that the linking number is invariant under ambient isotopy, it suffices to show that it is invariant under Movement $W$.

\begin{thm}\label{t.w.2}
Orientate the time-like loop $l$ and assign a normal to the closed surface $S = \pi_0(S)$. The linking number between $l$ and $S$ computed using the projection $\pi_0$, is an invariant under Movement $W$.
\end{thm}

\begin{proof}
We want to show that the linking number remains invariant under Movement $W$. Suppose we have points $\vec{x}$ and $\vec{y}$ in $l$, such that $\pi_0(\vec{x})$ and $\pi_0(\vec{y})$ are consecutive piercings on $S$.

Without any loss of generality,
\begin{itemize}
  \item choose an outward pointing normal on $S$;
  \item let $C$ be an arc in the loop $l \subset \bR^4$, with end points $\vec{x}$ and $\vec{y}$, such that the arc $\pi_0(C)$ from $\pi_0(l)$, going from $\pi_0(\vec{x})$ and $\pi_0(\vec{y})$, lies in the interior of $S$.
\end{itemize}

Let $S' \subset S$ be homeomorphic to an open disc, such that $\pi_0(\vec{x}), \pi_0(\vec{y}) \in \pi_0(S')$. Since we are going to apply Movement $W$ to remove the piercings, we may as well assume that $\pi_0(S')$ lies above $\pi_0(C)$. The normal of $S'$, $n_{S'}$ will be pointing upwards and the tangent vector of $\pi_0(C)$ at $\pi_0(\vec{x})$, is in opposite direction of the tangent vector of $\pi_0(C)$ at $\pi_0(\vec{y})$. Therefore, the orientation at $\pi_0(\vec{x})$ is of the opposite sign of the orientation at $\pi_0(\vec{y})$.

By Lemma \ref{l.w.1}, we can remove these two piercings using Movement $W$, when either $\tau(\vec{x})$ and $\tau(\vec{y})$ are both less than zero or both more than zero. In either case, we note that the height at both $\pi_0(\vec{x})$ and $\pi_0(\vec{y})$ are the same.

Thus, $\pi_0(\vec{x})$ and $\pi_0(\vec{y})$ have different algebraic numbers, so both their algebraic piercing numbers do not contribute to the linking number. Hence, the linking number remains invariant by removing these two piercings, using Movement $W$.
\end{proof}

For any time-like hyperlink $L = \{l^1, \cdots, l^n\}$, we will now
define the linking number between $L$ and $S$, as \beq {\rm lk}(L, S) := \sum_{u=1}^n{\rm lk}(l^u, S). \nonumber \eeq It is invariant under Movement $W$ between $S$ and the component loops in $L$.

By applying Movement $W$ if necessary, Corollary \ref{c.w.1} says that we may and will assume that for any two consecutive piercings $p = \pi_0(\vec{p})$ and $q = \pi_0(\vec{q})$, we must have $\tau(\vec{p}) < 0  < \tau(\vec{q})$. Henceforth, we will call $p$ ($q$) a left (right) piercing and say that it forms before (after) the formation of the surface $S$. Thus if two piercings are consecutive, then we will assume that one must be a left and the other a right piercing. If an oriented arc $\pi_0(C)$ going from the left piercing $\pi_0(\vec{p})$ to the right piercing $\pi_0(\vec{q})$ lies in the interior (exterior) of $S$, then all the other oriented arcs going from a left piercing to a right piercing must also lie in the interior (exterior).

Recall we defined the linear transformation $T_c$ in Equation (\ref{e.tc.1}). As we only have a finite number of piercings, we can find a $\xi > 0$ such that for any $0\leq c \leq \xi$, $S_c := T_c(S)$ is a closed surface in $\bR^4$ with \beq \tau(\vec{p}) < \inf_{\vec{x} \in S_c}\tau(\vec{x}) \leq \sup_{\vec{x} \in S_c}\tau(\vec{x})  < \tau(\vec{q}) \label{e.m.1} \eeq for any left piercing $\pi_0(\vec{p}) \in \pi_0(S_c)$ and right piercing $\pi_0(\vec{q}) \in \pi_0(S_c)$. Denote $\underline{\delta} = \inf_{\vec{x} \in S_c}\tau(\vec{x})$ and $\overline{\delta} = \sup_{\vec{x} \in S_c}\tau(\vec{x})$.

Note that $\pi_0: (x_0, x) \in S_c \mapsto x \in \pi_0(S_c)$ is injective. By abuse of notation, we define $\tau: x \in \pi_0(S_c) \rightarrow  \tau(x) \in \bR$, for $(\tau(x),x) \in S_c$. For any time $t \in \bR$, we can interpret $\tau^{-1}(t) \subset \pi_0(S_c)$ as the set that is formed at time $t$. Thus, the map $t \mapsto \tau^{-1}(-\infty, t]$ will describe the formation of $\pi_0(S_c)$. Before time $\underline{\delta}$, we see that the surface $\pi_0(S_c)$ has yet to form. As time progresses from $\underline{\delta}$ to $\overline{\delta}$, we see that patches of $\pi_0(S_c)$ begin to take shape and the closed surface will be fully formed at time $\overline{\delta}$ and beyond. Therefore, we will say that $[\underline{\delta}, \overline{\delta}]$ is the time period for which the closed surface $\pi_0(S)$ is formed.

Equation (\ref{e.m.1}) will thus describe the following event which consists of 3 occurrences; a left piercing is first formed, followed by the formation of the closed surface, and finally a consecutive right piercing is formed. During the time period between the formation of the two consecutive piercings, the oriented arc joining the two piercings can either be in the interior or in the exterior.

\begin{notation}
We will say that all the points of entry into the interior of $\pi_0(S_c)$ happen before (after) the formation of the closed surface and denote it by $l< S_c$ ($l > S_c$), in the former (latter).
\end{notation}

\begin{rem}\label{r.rs.2}
\begin{enumerate}
  \item We see that there is an implicit time-ordering between the formation of the surface and the piercings, defined by $l<S_c$ or $l > S_c$.
  \item By changing the orientation of the loop, we can change this time-ordering.
\end{enumerate}
\end{rem}

\subsection{Surfaces with boundary}

We will now consider when an orientable surface $S \subset \bR \times \bR^3$ has a non-empty boundary, $\partial S$. Based on our earlier discussion, it suffices to consider that $S \subset \{0\} \times \bR^3$. So $S$ is a Seifert surface of its boundary $\partial S$ and an orientation on its boundary $\partial S$ is consistent with the orientation on $S$. If we have a loop $l$ in $\bR \times \bR^3$, under ambient isotopy, $l$ can be `unlink' from $\partial S$, so this means that the linking number between $S$ and $l$ will be trivial.

To obtain a non-trivial linking number, we may impose the condition that the boundary $\partial S$, together with the loop $l$, must be a time-like hyperlink, which we will denote as $\chi(l, \partial S)$. And any ambient isotopy of $l$ and $S$ should maintain $\chi(l, \partial S)$ as time-like. But there is no reason why by keeping the boundary and the loop time-like, the linking number between the surface and the loop will be well-defined.

\begin{defn}
Let $l$ be a time-like loop and $S = \bigcup_{k=1}^n S_k$, whereby $S_k$ is a connected compact surface with boundary $\partial S_k$. We say that $l$ is time-ordered with $S$, if $\tau(\vec{x}) < \tau(\vec{y})$ or $\tau(\vec{x}) > \tau(\vec{y})$, for every $\vec{x} \in l$ and $\vec{y} \in S_k$. We will denote this relation as $l < S_k$ ($l > S_k$) for the former (latter), by abuse of notation.
\end{defn}

When we project $S$ and $l$ using $\pi_0$ in $\bR^3$, note that $\pi_0(\chi(l,\partial S))$ will form a link in $\bR^3$, by definition of time-like hyperlink.

\begin{prop}
Let $l$ be a time-like loop in $\bR \times \bR^3$ and $S \subset \{0\} \times \bR^3$ is a surface with boundary $\partial S$. Orientate both $l$ and $S$, and hence the projected boundary $\partial S$ is assigned an orientation, consistent with the orientation of the surface $S$.

Suppose $l$ is time-ordered with $S$. The linking number between $l$ and $S$ is computed to be equal to ${\rm sk}(l, \partial S)/6$. If we compute the linking number between $l$ and $S$ using $\pi_i$, $i=1,2,3$, then it will be 0.
\end{prop}

\begin{proof}
It suffices to prove the case when $S$ is connected. Note that $S$ in $\bR^3$ is a Seifert surface, with boundary $\partial S$. From \cite{rolfsen1976knots}, we know that the linking number between $S$ and $\pi_0(l)$, computed by summing up only the orientation of piercings in $S$, is equal to 1/2 times the linking number between $\partial S$ and $\pi_0(l)$. Because of the time-ordering, all the piercings have the same height, hence ${\rm lk}(l, S) = {\rm sk}(l, \partial S)/6$, by the definition of the linking number given in this article.

Now consider using the projection $\pi_i$. But because $\tau(\vec{x}) < \tau(\vec{y})$ or $\tau(\vec{x}) > \tau(\vec{y})$ for every $\vec{x} \in l$ and $\vec{y} \in S$, hence $\pi_i(l)$ will not intersect $\pi_i(S)$. So there is no piercing, therefore the linking number is computed to be 0 in this case.
\end{proof}

\begin{defn}\label{d.ts.2}
Start with a time-like, oriented loop $l$, disjoint from an orientable compact surface $S \subset \bR^3$ with boundary $\partial S$, also assigned with an orientation. We also assume that $\partial S$, together with $l$, is a time-like oriented hyperlink, denoted as $\chi(l, \partial S)$. We say that $(l,S)$ is time-like isotopic to an oriented pair $(l',S')$, preserving the time-ordering, if there is an orientation preserving continuous map $F$ as described in Definition \ref{d.ts.1}, such that
\begin{itemize}
  \item $F_1(S) = S'$, $F_1(\chi(l, \partial S)) = \chi(l', \partial S')$, and during the ambient isotopy process, $F_t(\chi(l, \partial S))$ remains a time-like hyperlink;
  \item the time-ordering between intermediate deformations $F_t(l)$ and $F_t(S)$ remains unchanged. Hence $l < S$ if and only if $l' < S'$, provided $S$ is connected.
\end{itemize}
We can then define an equivalence relation and hence define an equivalence class containing a pair $(l, S)$.
\end{defn}

\begin{rem}
Here, we impose time-ordering, which implies causality, and the equivalence relation ensures that it is not violated under the isotopy process.
\end{rem}

For an oriented equivalence class $[(l,S)]$ (both assigned with an orientation), we will now define the linking number between a time-like loop $l$ and an orientable compact surface $S$ with boundary, as \beq {\rm lk}(l, S) :=  {\rm sk}(l, \partial S)/6.
\label{e.lk.1} \eeq By definition of the equivalence relation, this is well-defined.

For an oriented time-like hyperlink $L = \{l^1, \cdots, l^n\}$, we say an oriented pair $(L, S)$ is time-like isotopic, preserving the time-ordering, to an oriented pair $(L',S')$, $L' = \{l^{\prime, 1}, \cdots, l^{\prime,n} \}$, if there is an orientation preserving continuous map $F$ as described in Definition \ref{d.ts.1}, with $F_1(L) = L'$, $F_1(S) = S'$, and showing that
\begin{itemize}
  \item $L$ is time-like isotopic to $L'$;
  \item for each component $l^u \in L$, we have $(l^u, S)$ is time-like isotopic, preserving the time-ordering, to $(l^{\prime,u}, S')$, for each $u=1,2, \cdots, n$.
\end{itemize}
By definition, we require for each $u = 1, \cdots , n$, $l^u$ is time-ordered with $S$.

Define the linking number between an oriented time-like hyperlink $L$ and an oriented surface $S$ with boundary, as \beq {\rm lk}(L, S) := \sum_{u=1}^n{\rm lk}(l^u, S), \nonumber \eeq ${\rm lk}(l^u, S)$ as defined in Equation (\ref{e.lk.1}). Then this linking number as defined, is invariant under any orientation preserving, ambient isotopy of $L$ and $S$, as long as the process keeps the deformed hyperlink and the boundary together as an oriented time-like hyperlink, while preserving the time-ordering between the component loops and the connected components in the surface.

\subsection{Piercing number}

Let $L$ be a time-like hyperlink and $S\subset \{0\} \times \bR^3$ be a compact surface, with or without boundary, and both are disjoint. Let $\pd(\pi_0; L, S)$ denote the set of piercings between $\pi_0(L)$ and $S\equiv \pi_0(S)$, and $\left|\pd(\pi_0; L, S)\right|$ is the total number of piercings in the set. Now, $\left|\pd(\pi_0; L, S)\right|$ is not invariant under ambient isotopy of a hyperlink $L$ and a surface $S$, since using Movement $W$, we can introduce more piercings.

\begin{defn}(Piercing number)\label{d.pn.1}\\
Suppose a surface $S \subset \{0\} \times \bR^3$ has no boundary and $L$ is a time-like hyperlink, disjoint from $S$. We say $(L', S') \sim (L, S)$, if $L \cup S$ are ambient isotopic to $L' \cup S'$, with $L$ time-like isotopic to $L'$.

If $S$ has boundary $\partial S$, $(L', S') \sim (L, S)$ will mean the time-like isotopy, preserving time-ordering, equivalence relation as described earlier.

The piercing number between $L$ and $S$, denoted as $\nu_S(L)$, will be defined to be the infimum of $\left|\pd(\pi_0; L', S')\right|$, taken over all possible pairs $(L',S')$ which are equivalent to $(L,S)$ under ambient isotopy when $S$ has no boundary; under the time-like isotopy, preserving time-ordering, when $S$ has boundary.
\end{defn}

When $S$ has no boundary, the piercing number will be an invariant between $L$ and $S$ under ambient isotopy. No time-ordering is required to be imposed.

As discussed earlier, the linking number between an oriented hyperlink with an oriented surface with boundary is not a topological invariant, but an invariant under time-like isotopy, preserving time-ordering, as given in Definition \ref{d.ts.2}. Hence, the piercing number will also be an invariant under this equivalence relation. Because we are using $\pi_0$ as the projection, we will obtain a non-trivial piercing number between a hyperlink and a surface.

\section{Framed hyperlinks}

Let $v$ be a non-tangential vector field on an oriented knot $\gamma$. Now shift the knot along $\epsilon v$, whereby $\epsilon > 0$ and is small. Call this shifted curve $\gamma^\epsilon := \gamma + \epsilon v$. A framing of a knot is a homotopy class of normal vector fields on $\gamma$ where two normal vector fields are said to be homotopic if they can be deformed into one another within the class of normal vector fields. Thus, the vector field $v$ defines a frame for the knot $\gamma$ and $\{\gamma, v\}$ is called a framed knot or ribbon.

Now consider $\gamma$ and $\gamma^\epsilon$ as separate knots. Project it down onto $\Sigma_i$ plane to form a link diagram as above. A half-twist is formed when a displaced copy of an arc inside $\gamma^\epsilon$ twirls around the original arc in $\gamma$, which projects onto a plane to form a crossing $q$. This is analogous to the two arcs forming the outline of a thin strip piece of paper and twisting it by $\pi$, giving us a half twist. Thus, we can define the algebraic crossing of the half-twist as $\varepsilon(q)$. For a more detailed description of a half-twist, we refer the reader to \cite{CS-Lim02}.

A framed link $L \subset \bR^3$ will be a finite set of non-intersecting simple closed curves, whereby each component knot is equipped with a frame. We can project it onto a plane as described above. Two link diagrams represent the same framed link, if one diagram can be obtained from the other diagram, by a sequence of Reidemeister moves ${\rm I}^\prime$, II and III. Reidemeister move ${\rm I}^\prime$ says that twisting in one direction, followed by twisting in the opposite direction, undo both twists. See page 271 in \cite{MR1321145}, Figure 8.2, which also shows a full twist. A full twist is just two consecutive half twists, twisted in the same direction, placed together. In any planar diagram of a framed link, we can always assume that half twists occur in pairs, thus forming a full twist. Two framed links $L$ and $L'$ are ambient isotopic to each other if when projected on the same plane to form link diagrams $D$ and $D'$ respectively, we can obtain $D'$ from $D$ by a finite sequence of Reidemeister moves.

Since twisting a ribbon in the positive direction, followed by twisting it in the negative direction, undo all the twists, hence we can and will assume that all the half-twists on a knot have the same algebraic number. Therefore, we can and will always assume that the total number of half-twists is a minimum on a framed knot. Obviously, the set of half-twists in a link-diagram will depend on which plane we project the framed knot on. However, the total number of half-twists will be the same and is an even number, independent of the plane we choose.

A framed hyperlink will be a hyperlink whereby each component loop $l^u$, when projected in $\bR^3$ to form a knot $\pi_0(l^u)$, is equipped with a frame. When we say a framed time-like hyperlink $L$ is time-like isotopic to a framed time-like hyperlink $L'$, we mean that $L$ and $L'$ are time-like isotopic as in Definition \ref{d.ts.1} and furthermore, $\pi_0(L)$ and $\pi_0(L')$ are ambient isotopic as framed links in $\bR^3$.

Now half-twists lie in a link diagram. However, we can `lift' these half-twists and represent them as nodes on $\pi_0(l)$. Therefore, in future, we will now view a framed knot as a knot in $\bR^3$, but with nodes attached to it. We can define the algebraic number of a node to be equal to the algebraic crossing number of its corresponding half-twist.

For a framed loop $l$, let ${\rm Nd}(\pi_0(l))$ be the set of nodes on a projected loop $\pi_0(l)$, assuming all of them have the same sign for its algebraic number. This means that we have the minimum number of nodes on the framed knot. Equivalence class of framed loop $l$ allows us to move the nodes along the framed knot $\pi_0(l)$, so we should view ${\rm Nd}(\pi_0(l))$ as an equivalence class.

Let $R$ be a bounded and possibly disconnected 3-dimensional submanifold inside spatial space $\bR^3$, containing all of its boundary. We will term $R$ as a compact solid region and view $R \subset \{0\} \times \bR^3$. Its boundary $\partial R$ will be a closed surface.

There is no notion of a linking number between a loop $l$ and a bounded 3-dimensional region $R \subset \bR^3$ inside $\bR^4$. But recall we always assume that all the nodes on $\pi_0(l)$ have the same sign, so indeed we will have the minimum even number of nodes. Therefore, we can define the confinement number between $l$ and $R$, by counting how many of the nodes in ${\rm Nd}(\pi_0(l)) \subset \bR^3$, viewed as being 0-dimensional manifold, lie in the interior of a compact region $R$.

Suppose a loop $l$ is disjoint from a compact region $R \subset \{0\} \times \bR^3$. If there is an arc $C$ in $l$ such that $\pi_0(C) \subset R$, then we must have that $\tau(\vec{x}) > 0$ or $\tau(\vec{x}) < 0$ for every $\vec{x} \in C$. By applying Movement $W$, we can slowly withdraw $\pi_0(C)$ out of the interior of $\partial R$, so under ambient isotopy, the projection $\pi_0$ of a loop $l'$ equivalent to $l$ will be disjoint from $\pi_0(R)$. The nodes of a framed knot $\pi_0(l')$ will not lie in the interior, which will give us a trivial confinement number. Hence we need to introduce the following definition of an equivalence class.

\begin{defn}\label{d.ts.3}
Suppose we have a pair $(L, R)$, whereby $L$ is a time-like hyperlink, disjoint from a compact solid region $R \subset \{0\} \times\bR^3$. Assume that each component loop $l^u \in L$ is a framed loop, i.e. we have nodes on $\pi_0(l^u)$, possibly empty, and all the nodes have the same algebraic sign. We say that $(L, R)$ is time-like isotopic to $(L', R')$ if there is a continuous map $F$ as defined in Definition \ref{d.ts.1}, such that
\begin{itemize}
  \item $F_1(L) = L'$ and for every $t$, $F_t(L)$ is a framed hyperlink;
  \item $F_1(R) = R' \in \{0\} \times \bR^3$ and for every $t$, $F_t(R) \subset \{0\} \times \bR^3$;
  \item $F_t(x) \notin \partial R$, for each node $x\in \pi_0(l^u)$.
\end{itemize}
Using the above isotopy, we can define an equivalence relation and hence an equivalence class containing $(L, R)$.
\end{defn}

\begin{rem}
The last property says that during the time-like isotopy, the nodes on the deformed loop $F_t(\pi_0(l^u))$ are not allowed to cross the boundary $\partial R$, which is a closed surface.
\end{rem}

\begin{defn}(Confinement number)\\
Let $R$ be a compact solid region in $\{0\} \times \bR^3$, $L= \{l^1, \ldots, l^n\}$ be a framed hyperlink, disjoint from $R$. Define ${\rm Nd}(\pi_0(l^u)) \subset \bR^3$, the finite set containing the minimum number of nodes on the projected loop $\pi_0(l^u) \subset \bR^3$, $l^u \in L$.
Thus, all the nodes have the same algebraic sign.

For an equivalence class $[(L, R)]$ as defined in Definition \ref{d.ts.3}, we define the confinement number, $\nu_R(L)$, as \beq \nu_R(L) := \sum_{u=1}^n \nu_R(l^u), \nonumber \eeq whereby $\nu_R(l^u)$ is equal to the number of nodes in ${\rm Nd}(\pi_0(l^u))$ that lie inside the interior of $R$.
\end{defn}

\begin{rem}
\begin{enumerate}
  \item If there is no frame assigned, hence no nodes, then the set of nodes will be the empty set. By definition, we only consider the minimum number of nodes on the knot, so we are not allowed to add more half-twists.
  \item Notice that we count the number of nodes from the framed knot, which lie inside the interior of $R$, under the assumption that the set ${\rm Nd}(\pi_0(l))$ is disjoint from the boundary of $\partial R$ of $R$. We allow the knot $\pi_0(l)$ and the region $R$ to be deformed up to ambient isotopy, as long as the set ${\rm Nd}(\pi_0(l))$ remains disjoint from the boundary $\partial R$.
\end{enumerate}
\end{rem}

\begin{prop}
The confinement number is an invariant, under the equivalence relation given in Definition \ref{d.ts.3}.
\end{prop}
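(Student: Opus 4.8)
The plan is to reduce the statement to a single component loop and then to extract all of the work from the defining clause of Definition \ref{d.ts.3} — that the nodes are never allowed to cross the boundary $\partial R$ — combined with a connectedness argument on the isotopy parameter $t\in[0,1]$.

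First I would observe that, because $\nu_R(L)=\sum_{u=1}^{n}\nu_R(l^u)$ is defined componentwise, it suffices to show that $\nu_R(l^u)$ is unchanged whenever $(L,R)$ is time-like isotopic to $(L',R')$ through a map $F$ as in Definition \ref{d.ts.3}. Before doing so I would record the two structural facts that make the count well posed: the nodes of $\pi_0(l^u)$ all carry the same algebraic sign and are minimal in number, and this minimal number is itself a framed-isotopy invariant (established earlier, where the total number of half-twists is shown to be even and independent of the projection plane). Since a time-like isotopy of a framed hyperlink in particular induces an ambient isotopy of the framed link $\pi_0(L)$ onto $\pi_0(L')$, the node count of $l^u$ equals that of $l^{\prime,u}$, and $F$ carries the set ${\rm Nd}(\pi_0(l^u))$ onto ${\rm Nd}(\pi_0(l^{\prime,u}))$. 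Hence the only quantity that can change is how these nodes are split between the interior $R\setminus\partial R$ and its complement.

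The heart of the argument is then a locally-constant/connectedness step. Fix a node $x\in{\rm Nd}(\pi_0(l^u))$; the isotopy gives a continuous path $t\mapsto F_t(x)$, while the region is simultaneously deformed through $F_t(R)$, a homeomorphic copy of $R$ lying in the fixed spatial slice $\{0\}\times\bR^3$ with separating boundary $F_t(\partial R)$. The defining condition of Definition \ref{d.ts.3} forbids $F_t(x)$ from ever lying on that boundary, so the predicate ``$F_t(x)$ is in the interior of $F_t(R)$'' is locally constant in $t$, and therefore constant on the connected interval $[0,1]$. Thus every node that starts in the interior of $R$ ends in the interior of $R'=F_1(R)$, and every node that starts outside stays outside. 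Summing the unchanged interior counts over the nodes of each component and then over $u=1,\dots,n$ yields $\nu_R(L)=\nu_{R'}(L')$.

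The step I expect to be the main obstacle is not the connectedness argument but pinning down the bookkeeping so that it applies cleanly. One must read Definition \ref{d.ts.3} as tracking precisely the minimal node set and as keeping each node both inside the spatial slice $\{0\}\times\bR^3$ and on a fixed side of the moving boundary $F_t(\partial R)$ throughout; in particular ``interior'' must remain unambiguous at every time, which is guaranteed by the requirement that $F_t(R)$ stays a compact region in $\{0\}\times\bR^3$. Once the node set is tracked continuously without any node touching $\partial R$ — and once one has ruled out the spurious creation or annihilation of canceling half-twist pairs by appealing to the minimality of ${\rm Nd}$ — the invariance of the interior count, and hence of $\nu_R(L)$, is immediate.
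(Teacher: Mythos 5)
Your proposal is correct and follows essentially the same route as the paper's proof: reduce to a single framed loop, observe that $F$ carries the minimal node set ${\rm Nd}(\pi_0(l^u))$ bijectively onto ${\rm Nd}(\pi_0(l^{\prime,u}))$, and invoke the defining condition of Definition \ref{d.ts.3} (nodes never meet the moving boundary) to conclude that interior nodes remain interior. The only difference is one of rigor: where the paper asserts this last step tersely ``by the property of $F$'', you justify it with an explicit locally-constant/connectedness argument in the isotopy parameter $t$, which sharpens but does not change the argument.
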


\begin{proof}
Let $[(l, R)] = [(l', R')]$ be an equivalence class, $l$ is a loop. It suffices to show that $\nu_R(l) = \nu_{R}(l')$ for a framed loop. Let $F$ be a continuous map as described in Definition \ref{d.ts.3}, such that $F_1(l) = l'$ and $F_1(R) = R'$. Because $F_t$ is a homeomorphism, it maps ${\rm Nd}(\pi_0(l))$ to ${\rm Nd}(\pi_0(l'))$ bijectively. The nodes that lie in the interior of $R$ can only be mapped to nodes in the interior of $R'$ and vice versa, again by the property of $F$. Thus, $\nu_R(l) = \nu_{R}(l')$.
\end{proof}

When there is an arc $C$ such that $\pi_0(C)$ is in the interior of $R$, then we note that this arc must occur before or after time $x_0 = 0$, implying causality. Thus, the nodes on $\pi_0(C)$, if any, must be time-ordered with the region $R$. By definition of the equivalence relation, the time occurrence of these nodes cannot change. Hence, causality is preserved under time-like isotopy.

\section{Summary}

We would like to conclude by summarizing all the facts we have have discussed. Our ambient space is $\bR \times \bR^3$, a 4-manifold. We consider the following submanifolds.

Let $\oL$ and $\uL$ be two distinct oriented hyperlinks, the former will be termed matter hyperlink, and latter termed as geometric hyperlink. The hyperlinks are expected to be time-like and for $\ol^u \in \oL$ and $\ul^v\in \uL$, we have either $\ol^u < \ul^v$ or $\ol^u > \ul^v$. Together, they form an oriented time-like hyperlink $\chi(\oL, \uL)$, consisting of time-ordered pair of hyperlinks.

Equip the matter hyperlink with a frame, so that $\pi_0(\oL)$ is a framed link. That is, add in nodes to $\pi_0(\oL)$. Further assume that we have the minimum number of nodes, which is equivalent to all the nodes from the same component knot in $\pi_0(\oL)$ having the same algebraic sign. The set of nodes should be thought of as an equivalence class, denoted by ${\rm Nd}(\pi_0(\oL))$.

Introduce an (possibly disconnected) oriented compact surface $S$ in $\{0\} \times \bR^3 \subset \bR^4$, with or without boundary. If it has non-empty boundary $\partial S$, then $\partial S \cup \chi(\oL, \uL)$ together must form a time-like oriented hyperlink and furthermore, each component loop in $\chi(\oL, \uL)$ is time-ordered with each connected component in $S$ with boundary. Finally we have a (possibly disconnected) compact solid region $R$ in $\{0\} \times \bR^3 \subset \bR^4$. We assume that $\chi(\oL, \uL)$ do not intersect $S$ and $R$.

Let us summarize all the invariants we have discussed in the following theorem.

\begin{thm}
Consider the oriented triple $\{S, R, \chi(\oL, \uL)\}$ as described above. Note that $\chi(\oL, \uL)$ do not intersect $S$ and $R$. We define a time-like and time-ordered equivalence relation, and say $\{S, R, \chi(\oL, \uL)\}$ is time-like isotopic to $\{\mathcal{S}, \mathcal{R}, \chi(\overline{\mathcal{L}}, \underline{\mathcal{L}})\}$, preserving the time-ordering, if there is an orientation preserving continuous map $F$ as described in Definition \ref{d.ts.1}, with $F_1(\chi(\oL, \uL)) = \chi(\overline{\mathcal{L}}, \underline{\mathcal{L}})$, $F_1(S) = \mathcal{S}$ and $F_1(R) = \mathcal{R}$, and showing that
\begin{enumerate}
  \item $\chi(\oL, \uL) $ is time-like isotopic, preserving the time-ordering, to $\chi(\overline{\mathcal{L}}, \underline{\mathcal{L}})$;
  \item $(\chi(\oL, \uL), S) \sim (\chi(\overline{\mathcal{L}}, \underline{\mathcal{L}}), \mathcal{S})$ as defined in Definition \ref{d.pn.1};
  \item $(\oL,R)$ is time-like isotopic to $(\overline{\mathcal{L}},\mathcal{R})$.
\end{enumerate}

If the above conditions are met, then we will call this equivalence class a time-like triple. Furthermore,
\begin{enumerate}
  \item the hyperlinking number ${\rm sk}(\oL, \uL)$ of $\chi(\oL, \uL)$;
  \item the linking number ${\rm lk}(\uL, S)$ between $\uL$ and $S$;
  \item the piercing number $\nu_S(\oL)$ between $\oL$ and $S$;
  \item the confinement number $\nu_R(\oL)$ between the framed hyperlink $\oL$ and $R$,
\end{enumerate}
are all invariant under time-like isotopy, preserving the time-ordering, as described above.

%
\end{thm}

\begin{rem}
Note that linking numbers for component knots in $\pi_0(\oL)$ do not appear in quantum geometry. Instead, they showed up in quantized Chern-Simons theory, as proved in  \cite{CS-Lim01}. The idea of using Chern-Simons theory to obtain knot invariants was first described in \cite{MR990772}.
\end{rem}

In the definition of the equivalence relation, we insist that time-ordering is preserved, between loops. On careful thought, this restriction is indeed necessary. Causality is respected in special relativity, i.e. cause and effect cannot be interchanged. Smolin in \cite{Smolin:2005mq}, stated that General Relativity is a physical relational theory, whereby causal order of events has to be respected.

Consider the equivalence class containing a matter loop and a geometric loop, for simplicity. In the equivalence class, we consider either the matter loop occurs before or after a geometric loop. This means two things. First, a cause and effect is implied in this class. Secondly, this cause and effect must be preserved under any homeomorphism of space-time, as required by the equivalence relation. We are still not allowed to switch their causality, even though an ambient isotopy between them may exist and it ensures they are time-like throughout the process. Hence causality between matter and geometric hyperlinks becomes a necessary condition under the equivalence relation.

Throughout the article, we introduced geometric objects like a compact surface and solid region. We also remarked that time-ordering is either implicitly or explicitly defined, hinting that a cause and effect event is at play. Furthermore, causality is preserved under the appropriate equivalence relation.

Quantum geometry was developed in the mid-nineties by several researchers and a detailed reference can be found in \cite{0264-9381-21-15-R01}, \cite{rovelli2004quantum} and \cite{Thiemann:2007zz}. From these articles, one can see that quantum geometry is in a way, describing the discretization of space-time. In \cite{Rovelli_2011}, Rovelli described quantum geometry as a subject on quanta of space-time. Using canonical quantization, he and his fellow co-author quantized area of a surface, which counts the number of times a spin network graph intersects a surface. See \cite{rovelli1995discreteness}. Their result was the first to show how the piercing number, can be used to compute area, in quantum geometry.

But make no mistake. It is not discrete geometry. In \cite{Ashtekar:2000eq}, the authors describe quantum geometry as a theory of interaction between quantum matter and geometry. In this article, quantum matter is represented by a matter hyperlink, and geometry is represented by a geometric hyperlink, a surface and a solid region. More generally, quantum geometry is a form of topological theory, which focuses on how submanifolds are `linked' in $\bR^4$ or any globally hyperbolic 4-manifold, rather than the ambient manifold itself. It was also remarked in \cite{Smolin:2005mq} that one should not focus on the topology on the ambient space, but rather how the events are causally linked.

But what sets it apart from other topological theories? In topology, area, volume and curvature have no meaning. One needs to define a metric or a connection to define these quantities. Unlike topological theory, in quantum geometry, we do have notions of area, volume and curvature in quantum geometry, without using a metric or connection.

In Loop Quantum Gravity, one uses the Einstein-Hilbert action to define a path integral. See \cite{EH-Lim02}. By averaging area of the surface or volume of a region over all (degenerate) metric, one can quantize the area and volume into its corresponding operators. This was done in \cite{EH-Lim03} and \cite{EH-Lim04} respectively. The eigenvalues are computed from the piercing and confinement numbers respectively, which are invariants of the time-like triple discussed above. The discrete eigenvalues hence show that space-time is discretized.

In a similar manner, we can quantize curvature of a surface by averaging over all connections on ambient space, into an operator. Quantized curvature now becomes an invariant under an equivalence relation, computed using the linking number between a surface and a hyperlink. See \cite{EH-Lim05}.

In quantum geometry, there is no preferred choice of metric or connection, hence no classical background geometry is introduced. See \cite{Smolin:2005mq}. So, area and curvature of a surface $S$ and the volume of a compact region $R$, represented up to ambient isotopy, are not defined in the classical sense. The time-like triple $\{S, R, \chi(\oL, \uL)\}$ considered in quantum geometry gives meaning to area, volume and curvature and turn these physical quantities into invariants, under an equivalence relation.



\end{document}